\renewcommand{\ge}{\geqslant}
\begin{document}

\date{\today}

\title[On the classes of higher-order Jensen- and Wright- convex functions II]{On the classes of higher-order Jensen-convex functions and Wright-convex functions, II}

\address{Department of Mathematics, University of Bielsko-Bia{\l}a, Willowa 2, 43--309 Bielsko--Biała, Poland}

\author{Jacek Mrowiec}
\author{Teresa Rajba}
\author{Szymon Wąsowicz}
\email[Jacek Mrowiec]{jmrowiec@ath.bielsko.pl}
\email[Teresa Rajba]{trajba@ath.bielsko.pl}
\email[Szymon Wąsowicz]{swasowicz@ath.bielsko.pl}

\begin{abstract}
Recently Nikodem, Rajba and Wąsowicz compared the classes of $n$-Wright-convex functions and $n$-Jensen-convex functions by showing that the first one is a~proper subclass of the latter one, whenever $n$ is an odd natural number. Till now the case of even~$n$ was an open problem. In this paper the complete solution is given: it is shown that the inclusion is proper for any natural~$n$. The classes of strongly $n$-Wright-convex and strongly $n$-Jensen-convex functions are also compared (with the same assertion).
\end{abstract}

\keywords{higher-order (Wright, Jensen)-convexity,
higher-order strong (Wright, Jensen)-convexity,
difference operator,
Hamel basis}
\subjclass[2010]{Primary: 26A51; Secondary: 26D15.}

\maketitle

\section{Introduction}

Let $\I\subset\R$ be an interval and $f\colon\I\to\R$. The \emph{difference operator} is given by
\[
 \Delta_hf(x)=f(x+h)-f(x)
\]
for any $x\in\I$ and $h\in\R$ \st\ $x+h\in\I$. In 1954 Wright~\cite{Wri54} considered the class of functions~$f:\I\to\R$, for which the function $\Delta_h f(\cdot)$ is non-decreasing for any $h>0$ small enough to guarantee that all the involved arguments belong to~$\I$. This monotonicity property is equivalent to the condition
\begin{equation}\label{eq:Wright}
 f\bigl(tx+(1-t)y\bigr)+f\bigl((1-t)x+ty\bigr)\xle f(x)+f(y)\quad \big(x,y\in\I,\;t\in[0,1]\big).
\end{equation}
Nowadays a~function fulfilling~\eqref{eq:Wright} is called to be \emph{Wright-convex}. For $t=\frac{1}{2}$ we get immediately
\[
 f\Bigl(\frac{x+y}{2}\Bigr)\xle\frac{f(x)+f(y)}{2}\quad\bigl(x,y\in\I\bigr),
\]
so any Wright-convex function is necessarily Jensen-convex. It was shown by Ng~\cite{Ng87} that any Wright-convex function is a~sum of the convex function (in the usual sense) and the additive one. Such representation leads to the simple (and now classical) proof that the inclusion in question is proper. Namely, if $a\colon\R\to\R$ is a~discontinuous additive function and $f(x)=|a(x)|$, then $f$~is discontinuous and Jensen-convex. Hence the graph of~$f$ is not dense on the whole plane. But it is well-known (\cf~\eg~\cite{Kuc09}) that the graphs of discontinuous additive functions (and also -- by Ng's representation -- the graphs of discontinuous Wright-convex functions) mapping~$\R$ into~$\R$ are dense on the whole plane. That is why~$f$ is not Wright-convex.
\par\medskip
Recently Nikodem, Rajba and Wąsowicz in the paper~\cite{NikRajWas12JMAA} considered the related comparison problem for the classes of $n$-Wright-convex functions and $n$-Jensen-convex functions (the explanation is given below in this section). If $n\in\N$ is odd, they have shown that the first class is the proper subclass of the latter one. The example of $n$-Jensen-convex function which is not $n$-Wright-convex was $f\colon\R\to\R$ \st\ $f(x)=\bigl(\max\{a(x),0\}\bigr)^n$, where $a\colon\R\to\R$ is the certain discontinuous additive function. Next P\'ales~\cite{Pal13} proved that this example works for any discontinuous additive function. The authors of~\cite{NikRajWas12JMAA} did not solve the  problem, if~$n$ is even. The objective of the present paper is to give the complete solution by showing that the inclusion is proper for all $n\in\N$. Moreover, the construction of our example does not depend on the parity of~$n$. Then for odd~$n$ we have a~solution different from the ones given in the papers~\cite{NikRajWas12JMAA,Pal13}.
\par\medskip
Let us recall the concepts of higher-order Jensen-convex (coming from Popoviciu~\cite{Pop34}) and Wright-convex functions (introduced by Gil\'anyi and P\'ales~\cite{GilPal01}).
\par\medskip
The iterates of ~difference operator are we defined as usual: for $n\in\N$
\[
 \Delta_{h_1\dots h_nh_{n+1}}f(x)=\Delta_{h_1\dots h_n}\bigl(\Delta_{h_{n+1}}f(x)\bigr)
\]
provided that all the involved arguments belong to~$\I$. If $h_1=\dots=h_n=h$, then we write 
\[
 \Delta_h^nf(x)=\Delta_{h\dots h}f(x).
\]
In particular, $\Delta_h^1f(x)=\Delta_hf(x)$. 
Let $n\in\N$ and $f\colon\I\to\R$ be a~function. We say that $f$ is \emph{Wright-convex of order~$n$} (or $n$-\emph{Wright-convex} for short), if
\[
 \Delta_{h_1\dots h_{n+1}}f(x)\xge 0
\]
for any $x\in\I$ and $h_1,\ldots,h_{n+1}>0$ \st\ $x+h_1+\dots+h_{n+1}\in\I$. If the above inequality is required only for $h_1=\dots=h_{n+1}=h>0$, precisely, if
\[
 \Delta_h^{n+1}f(x)\xge 0
\]
for all $x\in I$ and $h>0$ such that $x+(n+1)h\in\I$, then~$f$ is called \emph{Jensen-convex of order~$n$} ($n$-\emph{Jensen-convex} for short). It is easy to see that $1$-Wright-convexity and $1$-Jensen-convexity are equivalent to Wright-convexity and Jensen-convexity, respectively.
\par\medskip
By the above definitions any $n$-Wright-convex function is necessarily $n$-Jensen-convex. As we announced above, in this paper we show that this inclusion is proper.

\section{Main result}
Let us start with two preparatory notes. The following equations hold for a~power function function $f(x)=x^n$ (\cf~\cite[Lemma 15.9.2]{Kuc09}):
\begin{equation}\label{eq:delta-xn}
 \Delta_{h_1\ldots h_n}\bigl(x^n\bigr)=n!\cdot\prod_{i=1}^nh_i \qquad\text{and}\qquad \Delta_h^n\bigl(x^n\big)=n!\cdot h^n.
\end{equation}
\par\medskip
The formula below was given in~\cite{NikRajWas12JMAA} (as a~part of the proof of Corollary~2.2).
\begin{lem}\label{lemat2}
Let $a\colon\R\to\R$ be an additive function, $g\colon\R\to\R$ and $n\in\N$.
Then
\[
 \Delta_{h_1\dots h_n}(g\circ a) (x)=\Delta_{a(h_1)\dots\,a(h_n)}g\bigl(a(x)\bigr)
\]
for any $x,h_1,\dots,h_n\in\R$.
\end{lem}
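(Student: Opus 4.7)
The plan is to prove the formula by induction on $n$, with the additivity of $a$ providing the crucial ingredient at every stage.

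For the base case $n=1$, I would simply unfold the definition of the difference operator and apply additivity:
\[
 \Delta_{h_1}(g\circ a)(x) = g\bigl(a(x+h_1)\bigr)-g\bigl(a(x)\bigr) = g\bigl(a(x)+a(h_1)\bigr)-g\bigl(a(x)\bigr) = \Delta_{a(h_1)}g\bigl(a(x)\bigr),
\]
where the middle equality uses $a(x+h_1)=a(x)+a(h_1)$.

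For the inductive step, assume the formula holds for some $n\in\N$, and consider $\Delta_{h_1\dots h_{n+1}}(g\circ a)(x)$. By the definition of the iterated difference, this equals $\Delta_{h_1\dots h_n}\bigl(\Delta_{h_{n+1}}(g\circ a)\bigr)(x)$. The trick is to recognize that the inner function $\Delta_{h_{n+1}}(g\circ a)$ is itself a composition of $a$ with a suitable function. Indeed, if I set $G(y):=g\bigl(y+a(h_{n+1})\bigr)-g(y)=\Delta_{a(h_{n+1})}g(y)$, then the $n=1$ case shows
\[
 \Delta_{h_{n+1}}(g\circ a)(x) = g\bigl(a(x)+a(h_{n+1})\bigr)-g\bigl(a(x)\bigr) = G\bigl(a(x)\bigr) = (G\circ a)(x).
\]
Applying the inductive hypothesis to $G\circ a$ yields
\[
 \Delta_{h_1\dots h_n}(G\circ a)(x) = \Delta_{a(h_1)\dots a(h_n)}G\bigl(a(x)\bigr) = \Delta_{a(h_1)\dots a(h_n)}\Delta_{a(h_{n+1})}g\bigl(a(x)\bigr),
\]
which is exactly $\Delta_{a(h_1)\dots a(h_{n+1})}g\bigl(a(x)\bigr)$, completing the induction.

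I do not anticipate a genuine obstacle here: the identity is a direct consequence of additivity, and the only thing that needs care is choosing the right function to which to apply the inductive hypothesis (namely the ``shifted difference'' $G$ above), so that one really reduces $n+1$ to $n$ rather than going in circles. No restrictions on $x$ or $h_i$ arise because $a$ is defined on all of $\R$ and the statement is for $g\colon\R\to\R$.
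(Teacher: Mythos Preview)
Your induction argument is correct: the base case is a direct unfolding plus additivity, and in the inductive step the introduction of $G=\Delta_{a(h_{n+1})}g$ cleanly exhibits $\Delta_{h_{n+1}}(g\circ a)$ as $G\circ a$, so the hypothesis applies verbatim. Note that the present paper does not actually supply a proof of this lemma; it merely quotes the identity from \cite{NikRajWas12JMAA} (proof of Corollary~2.2 there), so there is no ``paper's own proof'' to compare against here. Your straightforward induction is exactly the standard way to establish this fact and would serve as a self-contained replacement for the citation.
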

Now we are in a position to state our main result.
\begin{thm}\label{th_main}
For any $n\in\N$ the class of $n$-Wright-convex functions is a~proper subclass of the class of $n$-Jensen-convex functions.
\end{thm}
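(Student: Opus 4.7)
The easy direction is immediate: setting $h_1=\dots=h_{n+1}=h$ in the $n$-Wright-convexity inequality reduces it to $n$-Jensen-convexity. To establish properness, my plan is to propose, for every $n\in\N$, the function
\[
 f(x) \;=\; x^{\,n-1}\,a(x)^{\,2},
\]
where $a\colon\R\to\R$ is a~suitably chosen discontinuous additive function; note that this single template works regardless of the parity of~$n$, in contrast with the constructions of~\cite{NikRajWas12JMAA,Pal13}.

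I would compute the relevant finite differences by the Leibniz rule applied to $u(x)=x^{n-1}$ and $v(x)=a(x)^{2}$. By~\eqref{eq:delta-xn}, $\Delta_{h_{i_1}\dots h_{i_k}}u=0$ for $k>n-1$; and since $\Delta_h a(x)=a(h)$, a~direct calculation yields $\Delta_{h_i h_j}v(x)=2\,a(h_i)\,a(h_j)$ (constant in~$x$) while $\Delta_{h_{i_1}\dots h_{i_k}}v=0$ for $k\ge 3$. Hence only the ``balanced'' Leibniz term---with a~block of size $n-1$ acting on~$u$ and a~block of size $2$ acting on~$v$---survives, giving the identity
\[
 \Delta_{h_1\dots h_{n+1}}f(x) \;=\; 2(n-1)!\sum_{1\le j<k\le n+1} a(h_j)\,a(h_k)\prod_{i\neq j,\,k}h_i.
\]
Specialising $h_1=\dots=h_{n+1}=h$ collapses the sum to $\binom{n+1}{2}h^{n-1}a(h)^{2}$, so $\Delta_h^{n+1}f(x)=(n+1)!\,h^{n-1}a(h)^{2}\ge 0$ for every $h>0$, and $f$ is $n$-Jensen-convex for every choice of~$a$.

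It remains to force the displayed sum to be strictly negative at some positive $h_1,\dots,h_{n+1}$. For this, I would construct $a$ via a~Hamel basis $H$ of $\R$ over~$\mathbb{Q}$: fix three distinct positive $e_0,e_1,e_2\in H$ and extend $\mathbb{Q}$-linearly the assignment $a(e_0)=0$, $a(e_1)=1$, $a(e_2)=-1$, and $a(e)=0$ for all remaining $e\in H$ (the resulting $a$ is automatically discontinuous). Putting $h_1=e_1$, $h_2=e_2$, and $h_3=\dots=h_{n+1}=e_0$ (vacuous if $n=1$), every summand containing some $a(h_i)$ with $i\ge 3$ vanishes, and only $\{j,k\}=\{1,2\}$ contributes, producing the value $a(e_1)\,a(e_2)\,e_0^{n-1}=-e_0^{n-1}<0$; so $f$ is not $n$-Wright-convex. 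The main obstacle is the combinatorial bookkeeping behind the above identity---one must check that only the balanced Leibniz split is non-zero---while the Hamel-basis construction of~$a$ is then routine.
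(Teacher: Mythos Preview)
Your argument is correct. The Leibniz rule for iterated differences,
\[
 \Delta_{h_1\dots h_{m}}(uv)(x)=\sum_{S\subseteq\{1,\dots,m\}}\Delta_{(h_i)_{i\in S}}u(x)\cdot\Delta_{(h_j)_{j\in S^c}}v\Bigl(x+\sum_{i\in S}h_i\Bigr),
\]
is easily proved by induction, and in your situation the only surviving summands (those with $|S|=n-1$) have \emph{both} factors constant in~$x$, so the shift is harmless and your displayed identity follows. The Jensen-convexity check $\Delta_h^{n+1}f(x)=(n+1)!\,h^{n-1}a(h)^2\ge 0$ is then immediate, and your Hamel-basis choice of $a$ and of $h_1,\dots,h_{n+1}$ kills every pair except $\{1,2\}$, yielding $\Delta_{h_1\dots h_{n+1}}f(x)=-2(n-1)!\,e_0^{\,n-1}<0$. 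The case $n=1$ (where $f=a^2$ and the empty product equals~$1$) is covered as well.

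Your construction is genuinely different from ours. We take $f(x)=\alpha(x)^{n+1}+\beta(x)^{n+1}$, where $\alpha,\beta$ are additive with $\alpha+\beta=\mathrm{id}$; then Lemma~\ref{lemat2} and~\eqref{eq:delta-xn} give $\Delta_{h_1\dots h_{n+1}}f=(n+1)!\bigl(\prod\alpha(h_i)+\prod\beta(h_i)\bigr)$ in one line, with no product rule needed, and the Jensen-convexity reduces to the elementary fact that $a^{n+1}+b^{n+1}>0$ whenever $a+b>0$. Your approach trades that algebraic observation for a combinatorial one: you pay the price of the Leibniz bookkeeping, but in return the nonnegativity $h^{n-1}a(h)^2\ge 0$ is automatic and requires no parity discussion at all. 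Both routes are short; yours has the pleasant feature that the Jensen side is trivially a square, while ours keeps the difference computation entirely within Lemma~\ref{lemat2}.
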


\begin{proof}
 Because every $n$-Wright-convex function is $n$-Jensen-convex, it is enough to construct an~$n$-Jensen-convex function $f\colon\R\to\R$ which is not $n$-Wright convex. To this end consider a~Hamel basis $H$ of the linear space $\R$ over~$\Q$ \st~$h_0=1\in H$. Any $x\in\R$ is uniquely represented as a linear combination
\[
 x=\lambda_0h_0+\sum_{t}\lambda_th_t,
\]
where $\lambda_0,\lambda_t\in\Q$, $h_t\in H$. Obviously the functions $\alpha(x)=\lambda_0h_0$ and $\beta(x)=x-\alpha(x)$ are additive. Define the function $f\colon\R\to\R$ by
\[
 f(x)=\bigl(\alpha(x)\bigr)^{n+1}+\bigl(\beta(x)\bigr)^{n+1}.
\]
Since the difference operator is additive, we infer by Lemma~\ref{lemat2} and the equation~\eqref{eq:delta-xn} that
\begin{multline}\label{eq:dj}
\Delta_{h_1\dots h_{n+1}}f(x)=\Delta_{\alpha(h_1)\ldots\alpha(h_{n+1})}\bigl(\alpha(x)\bigr)^{n+1}+\Delta_{\beta(h_1)\ldots\beta(h_{n+1})}\bigl(\beta(x)\bigr)^{n+1}\\
=(n+1)!\biggl(\prod_{i=1}^{n+1}\alpha(h_i)+\prod_{i=1}^{n+1}\beta(h_i)\biggr).
\end{multline}
In particular,
\[
 \Delta_h^{n+1}f(x)=(n+1)!\Bigl(\bigl(\alpha(h)\bigr)^{n+1}+\bigl(\beta(h)\bigr)^{n+1}\Bigr).
\]
Let $h>0$. Because of the representation $h=\alpha(h)+\beta(h)$, it is easy to see that $\bigl(\alpha(h)\bigr)^{n+1}+\bigl(\beta(h)\bigr)^{n+1}>0$. Therefore $\Delta_h^{n+1}f(x)>0$ which shows that~$f$ is $n$-Jensen-convex.

To prove that $f$ is not $n$-Wright-convex take now $h_1=-1+\sqrt{2}$, $h_2=1$ and (if $n\ge 2$) $h_3=\dots=h_{n+1}=1+\sqrt{2}$. Then $h_i>0$, $i=1,\dots,n+1$. Moreover $\alpha(h_1)=-1$, $\alpha(h_i)=1$,  $i=2,\dots,n+1$ and $\beta(h_2)=0$. By~\eqref{eq:dj} we obtain 
\[
 \Delta_{h_1\ldots h_{n+1}}f(x)=-(n+1)!<0.
\]
It means that~$f$ is not $n$-Wright-convex.
\end{proof}

\section{The classes of higher order strongly Jensen-convex functions and Wright-convex functions}

Let $n\in\N$ and $c>0$. A~function $f\colon\I\to\R$ is called \emph{strongly Wright-convex of order $n$ with modulus $c$} (or strongly $n$-Wright-convex with modulus $c$) if
\[
\Delta_{h_1\ldots h_{n+1}}f(x)\xge c (n+1)!\,h_1\ldots h_{n+1}
\]
for all $x\in I$ and $h_1,\ldots,h_{n+1}>0$ such that $x+h_1+\dots+h_{n+1}\in\I$. If the above inequality is required only for $h_1=\dots=h_{n+1}=h>0$, precisely, if
\[
 \Delta_{h}^{n+1}f(x)\xge c(n+1)!\,h^{n+1}
\]
for all $x\in I$ and $h>0$ such that $x+(n+1)h\in\I$, then $f$ is called \emph{strongly Jensen-convex of order $n$ with modulus~$c$} (or strongly $n$-Jensen-convex with modulus $c$) (\cf~ \cite{GerNik11}).
For $c=0$ we arrive at standard $n$-Wright-convex and $n$-Jensen-convex functions, respectively.
\par\medskip
The following characterization of strongly $n$-Wright-convex functions was recently given in~\cite{GilMerNikPal15}.
\begin{thm}\label{tw4}
Let $n\in\N$ and $c>0$. A function $f\colon\I\to\R$ is strongly $n$-Wright-convex with modulus~$c$ if and only if the function $g\colon\I\to\R$ \st\ $g(x)=f(x)-cx^{n+1}$, is $n$-Wright-convex.
\end{thm}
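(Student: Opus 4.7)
The plan is to reduce the equivalence to a single one-line identity obtained from the linearity of the difference operator together with the first formula in~\eqref{eq:delta-xn} applied to the power function $x^{n+1}$.

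First I would observe that for any $x\in\I$ and any $h_1,\dots,h_{n+1}$ for which the expression makes sense, $f(x)=g(x)+cx^{n+1}$ and the difference operator is additive, so
\[
 \Delta_{h_1\dots h_{n+1}}f(x)=\Delta_{h_1\dots h_{n+1}}g(x)+c\,\Delta_{h_1\dots h_{n+1}}\bigl(x^{n+1}\bigr).
\]
Applying the first identity of~\eqref{eq:delta-xn} with $n$ replaced by $n+1$ yields
\[
 \Delta_{h_1\dots h_{n+1}}\bigl(x^{n+1}\bigr)=(n+1)!\,h_1\cdots h_{n+1},
\]
so the displayed equation becomes
\[
 \Delta_{h_1\dots h_{n+1}}f(x)-c(n+1)!\,h_1\cdots h_{n+1}=\Delta_{h_1\dots h_{n+1}}g(x).
\]

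From this identity the equivalence is immediate: the left-hand side is nonnegative for all positive $h_1,\dots,h_{n+1}$ exactly when $f$ is strongly $n$-Wright-convex with modulus~$c$ (by the very definition), and the right-hand side is nonnegative in the same range exactly when $g$ is $n$-Wright-convex. I would just assert both directions by reading off this single equation.

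There is essentially no obstacle in this argument; the only thing worth being careful about is to invoke \eqref{eq:delta-xn} with the correct index (i.e., with $n+1$ in place of $n$) and to keep in mind that the restrictions $h_i>0$ and $x+h_1+\dots+h_{n+1}\in\I$ are identical on both sides of the equivalence, so no domain issues arise.
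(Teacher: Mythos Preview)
Your argument is correct. Note, however, that the paper does not supply its own proof of this theorem: it is quoted as a known characterization from~\cite{GilMerNikPal15}. That said, your proof is exactly the Wright-convex analogue of the paper's proof of Theorem~\ref{tw5}: where the paper applies the second identity of~\eqref{eq:delta-xn} to $\Delta_h^{n+1}(x^{n+1})$ and reads off the Jensen-convex equivalence, you apply the first identity of~\eqref{eq:delta-xn} (with $n+1$ in place of $n$) to $\Delta_{h_1\dots h_{n+1}}(x^{n+1})$ and read off the Wright-convex equivalence. There is nothing to add; the domain constraints and the sign conditions match on both sides, just as you point out.
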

It is surprising that so far nobody wrote explicitly the similar characterization of strongly $n$-Jensen-convex functions. We fill this gap now.
\begin{thm}\label{tw5}
Let $n\in\N$ and $c>0$. A function $f\colon\I\to\R$ is strongly $n$-Jensen-convex with modulus~$c$ if and only if the function $g\colon\I\to\R$ \st\ $g(x)=f(x)-cx^{n+1}$, is $n$-Jensen-convex.
\end{thm}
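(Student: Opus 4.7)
The plan is to mirror exactly the structure of Theorem~\ref{tw4}, but using the diagonal iterate $\Delta_h^{n+1}$ in place of the full mixed difference $\Delta_{h_1\dots h_{n+1}}$. The engine is the linearity (additivity with respect to the function argument) of the difference operator, combined with the second equality in~\eqref{eq:delta-xn}, namely $\Delta_h^{n+1}(x^{n+1})=(n+1)!\,h^{n+1}$.

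Concretely, I would fix $x\in\I$ and $h>0$ with $x+(n+1)h\in\I$ and compute
\[
 \Delta_h^{n+1}g(x)=\Delta_h^{n+1}f(x)-c\,\Delta_h^{n+1}\bigl(x^{n+1}\bigr)=\Delta_h^{n+1}f(x)-c(n+1)!\,h^{n+1}.
\]
From this single identity both implications fall out at once: the inequality $\Delta_h^{n+1}g(x)\ge 0$ (i.e.\ $n$-Jensen-convexity of $g$) is literally equivalent to $\Delta_h^{n+1}f(x)\ge c(n+1)!\,h^{n+1}$ (i.e.\ strong $n$-Jensen-convexity of $f$ with modulus~$c$), and this equivalence holds pointwise in $(x,h)$, hence for the whole class of admissible pairs. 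There is no real obstacle; the only thing worth stressing in the write-up is why \eqref{eq:delta-xn} applies here, namely that $x^{n+1}$ is a power function of the exponent that matches the order of the iterated difference, so $\Delta_h^{n+1}(x^{n+1})=(n+1)!\,h^{n+1}$ is exact (not merely nonnegative). After that the proof is a single displayed equation followed by the observation ``$\Leftrightarrow$''.
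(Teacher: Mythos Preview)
Your proposal is correct and matches the paper's own proof essentially line for line: both exploit the additivity of $\Delta_h^{n+1}$ together with the exact identity $\Delta_h^{n+1}(x^{n+1})=(n+1)!\,h^{n+1}$ from~\eqref{eq:delta-xn} to reduce the claim to the single equation $\Delta_h^{n+1}g(x)=\Delta_h^{n+1}f(x)-c(n+1)!\,h^{n+1}$. The only cosmetic difference is that the paper writes the two implications out separately, whereas you note directly that the equivalence is pointwise in $(x,h)$.
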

\begin{proof}
Suppose first, that the function $f$ is strongly $n$-Jensen-convex with modulus $c$. Applying~\eqref{eq:delta-xn} to $g$ we infer that
\[
\Delta_{h}^{n+1}g(x)= \Delta_{h}^{n+1}f(x) - \Delta_{h}^{n+1}(c x^{n+1}) \xge c (n+1)!\,h^{n+1}- c (n+1)!\,h^{n+1}=0,
\]
whence $g$ is $n$-Jensen-convex.

Conversely, if $g$ is an $n$-Jensen-convex with modulus~$c$, use~\eqref{eq:delta-xn} for $f(x)=g(x)+cx^{n+1}$:
\[
\Delta_{h}^{n+1}f(x)= \Delta_{h}^{n+1}g(x) + \Delta_{h}^{n+1}(c x^{n+1}) \xge 0+ c (n+1)!\,h^{n+1}=c (n+1)!\,h^{n+1}.
\]
This proves that~$f$ is strongly $n$-Jensen-convex.
\end{proof}
The following result we derive from Theorems \ref{th_main}, \ref{tw4}, \ref{tw5}.
\begin{thm}
Let $n\in\N$ and $c>0$. The class of strongly $n$-Wright-convex functions with modulus~$c$ is a~proper subclass of the class of strongly $n$-Jensen-convex functions with modulus~$c$.
\end{thm}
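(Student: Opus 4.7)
The plan is to reduce everything to Theorem~\ref{th_main} by using the two equivalences in Theorems~\ref{tw4} and~\ref{tw5} as a dictionary between ``strongly $n$-convex with modulus~$c$'' and ``$n$-convex'' (in both the Jensen and Wright flavors).

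First I would dispatch the inclusion: if $f$ satisfies
\[
 \Delta_{h_1\dots h_{n+1}}f(x)\xge c(n+1)!\,h_1\dots h_{n+1}
\]
for all admissible $x,h_1,\dots,h_{n+1}>0$, then specializing to $h_1=\dots=h_{n+1}=h$ immediately yields the defining inequality of strong $n$-Jensen-convexity with modulus~$c$. So the strong $n$-Wright class is contained in the strong $n$-Jensen class, and only properness requires work.

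For properness, the idea is to transport the example from Theorem~\ref{th_main}. Let $g\colon\R\to\R$ be the function constructed there, which is $n$-Jensen-convex but not $n$-Wright-convex. Define
\[
 f(x)=g(x)+cx^{n+1}.
\]
Then by Theorem~\ref{tw5}, since $f(x)-cx^{n+1}=g(x)$ is $n$-Jensen-convex, $f$ is strongly $n$-Jensen-convex with modulus~$c$. On the other hand, by Theorem~\ref{tw4}, if $f$ were strongly $n$-Wright-convex with modulus~$c$, then $f(x)-cx^{n+1}=g(x)$ would be $n$-Wright-convex, contradicting the choice of $g$. Hence $f$ lies in the strong $n$-Jensen class but not in the strong $n$-Wright class.

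There is no real obstacle here; the whole proof is essentially a bookkeeping exercise, provided Theorems~\ref{tw4} and~\ref{tw5} are stated in the form given (with the correct modulus~$c$ appearing on both sides of the equivalence, so that the perturbation $cx^{n+1}$ translates the strong inequality into the ordinary one). The only point worth double-checking in writing the proof is that the ``modulus $c$'' of Theorem~\ref{tw5} matches the $c$ used to build $f$, which it does by construction.
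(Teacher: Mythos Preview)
Your proof is correct and follows essentially the same route as the paper: take the $n$-Jensen-convex but not $n$-Wright-convex function $g$ from Theorem~\ref{th_main}, set $f(x)=g(x)+cx^{n+1}$, and use Theorems~\ref{tw4} and~\ref{tw5} to translate the properties. The only cosmetic difference is that you spell out the (trivial) inclusion directly from the definitions, whereas the paper simply notes that only properness needs proving.
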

\begin{proof}
It is enough to show, that there exists a strongly $n$-Jensen--convex function with modulus $c$, which is not strongly $n$-Wright--convex with modulus $c$. By Theorem~\ref{th_main} there exists a function~$g$, which is $n$-Jensen-convex and $g$ is not $n$-Wright-convex. Let $f(x)=g(x)+cx^{n+1}$. Since $g$ is $n$-Jensen-convex, it follows by Theorem~\ref{tw5} that $f$~is strongly $n$-Jensen-convex with modulus~$c$. By Theorem~\ref{tw4}~$f$ is not strongly $n$-Wright-convex with modulus~$c$ (because the function $g(x)=f(x)-cx^{n+1}$ is not $n$-Wright-convex). This concludes the proof.
\end{proof}

\end{document}